\numberwithin{equation}{section}
\newtheorem{theorem}[equation]{Theorem}
\newtheorem{proposition}[equation]{Proposition}
\newtheorem{lemma}[equation]{Lemma}
\newtheorem{corollary}[equation]{Corollary}
\theoremstyle{definition}
\newtheorem{rmk}[equation]{Remark}
\newtheorem{eg}[equation]{Example}
\newenvironment{example}[1][]{\begin{eg}[#1] \pushQED{\qed}}{\popQED \end{eg}}
\newtheorem{defn}[equation]{Definition}
\newenvironment{definition}[1][]{\begin{defn}[#1]\pushQED{\qed}}{\popQED \end{defn}}
\newcommand{\bC}{\mathbf{C}}
\newcommand{\bP}{\mathbf{P}}
\newcommand{\bQ}{\mathbf{Q}}
\renewcommand{\phi}{\varphi}
\renewcommand{\emptyset}{\varnothing}
\def\Ddots{\mathinner{\mkern1mu\raise\p@
\vbox{\kern7\p@\hbox{.}}\mkern2mu
\raise4\p@\hbox{.}\mkern2mu\raise7\p@\hbox{.}\mkern1mu}}
\newcommand{\GL}{\mathbf{GL}}
\title{Isotropic Subspaces of Schur Modules}
\author{Leesa B. Anzaldo}
\date{February 5, 2018}
\begin{document} 
\maketitle

\begin{abstract}
It is a well-known fact that over the complex numbers and for a fixed $k$ and $n$, a generic $s$ in $Sym^2V^*$ vanishes on some $k$-dimensional subspace of $V$ if and only if $n\geq 2k$. Tevelev found exact conditions for the extension of this statement for general symmetric and skew-symmetric multilinear forms, and we extend his work to all possible symmetric types, which corresponds to Schur modules for a general partition.
\end{abstract}

\section{Introduction}

Given a generic homogeneous quadratic polynomial over $\bC$, when does its zero set contain a $k$-dimensional subspace? Geometrically, this is equivalent to asking when a generic degree 2 projective hypersurface contains a $(k-1)$-dimensional linear subspace. The answer comes from the well-known fact about symmetric bilinear forms: for a generic $s\in Sym^2V^*$, there exists a $k$-dimensional subspace $W$ of $V$ such that $s|_W=0$ if and only if $n\geq 2k$. One can generalize this question to symmetric multilinear forms: if $V=\bC^n$, when does a generic $s\in (Sym^d V)^*$ vanish on some $k$-dimensional subspace of $V$? Tevelev answers this question for not only symmetric, but also for skew-symmetric multilinear forms in \cite{tv}. Putting aside some exceptions, Tevelev shows that this occurs exactly when
\[n\geq\frac{\binom{d+k-1}{d}}{k}+k\quad\text{or}\quad n\geq\frac{\binom{k}{d}}{k}+k\]
for symmetric or skew-symmetric multilinear forms, respectively.

Here, we consider forms whose symmetries are not considered by Tevelev; this is done by studying the vanishing of $s\in (S_\lambda V)^*$, where $\lambda$ is a nonempty partition which is neither a single row nor a single column partition and $S_{\lambda}$ is the Schur functor associated with $\lambda$. We show that for an $n$-dimensional vector space $V$, partition $\lambda$, and $k\geq3$ such that $2\leq\ell(\lambda)\leq k$ and $\lambda_1\geq2$, a generic $s\in(S_\lambda V)^*$ is $k$-isotropic if and only if
\[n\geq\frac{\dim{(S_\lambda\bC^k)}}{k}+k.\]
Using combinatorial tools, we prove several inequalities about Schur polynomials to obtain this result. Geometrically, we can interpret the main result as precise conditions for the existence of a $k$-dimensional subspace $W$ of $V$ such that $Flag_\lambda(W)$ is in the zero locus of $s$.

\section{Preliminaries}

Let $V=\bC^n$ and $\mathcal{R}$ be the tautological subbundle of $Gr(k,V)$. Recall that for a partition $\lambda$, the Schur module $S_\lambda M$ is a functor with respect to a module $M$, namely the image of the Schur map \cite[76]{fr}. For $s\in H^0(Gr(k,V),S_\lambda\mathcal{R}^*)$, a $k$-dimensional subspace $W$ of $V$ is {\bf isotropic with respect to $s$} if $s(W)=0$. Moreover, we say $s$ is {\bf $k$-isotropic} if there exists a subspace $W$ of $V$ that is isotropic with  respect to $s$. Recall the following theorem (see \cite[Corollary 4.1.9]{we})
\begin{theorem}[Borel-Weil] If $\lambda$ is a partition, then as representations of $\GL(V)$,
\[H^0(Gr(k,V),S_{\lambda}\mathcal{R}^*)=(S_\lambda V)^*.\]
\end{theorem}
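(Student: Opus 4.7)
The plan is to deduce the isomorphism from the Borel--Weil theorem applied to the partial flag variety $Gr(k,V)=\GL(V)/P$, where $P \subset \GL(V)$ is the parabolic subgroup stabilizing a fixed reference $k$-subspace $W_0 \subset V$. The Levi quotient of $P$ is $\GL(W_0) \times \GL(V/W_0)$. Under the dictionary between $P$-representations and $\GL(V)$-equivariant vector bundles on $\GL(V)/P$, the tautological subbundle $\mathcal{R}$ corresponds to $W_0$ (via $P \twoheadrightarrow \GL(W_0)$, extended trivially on the other Levi factor and on the unipotent radical). Since the Schur functor $S_\lambda$ is a functor on representations that commutes with this equivariant induction, the bundle $S_\lambda \mathcal{R}^*$ corresponds to the irreducible $P$-representation $S_\lambda W_0^*$.

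The second step is to identify the highest weight and apply Borel--Weil. Choosing a maximal torus of $\GL(V)$ compatible with the decomposition determined by $W_0$, characters of the torus are tuples $(a_1,\dots,a_n)$; the irreducible $P$-module $S_\lambda W_0^*$ has highest weight $(\lambda_1,\dots,\lambda_k,0,\dots,0)$ with respect to a Borel chosen so that this weight is dominant for $\GL(V)$. Dominance here is automatic because $\lambda$ is a partition, so $\lambda_1 \geq \dots \geq \lambda_k \geq 0$. The Borel--Weil theorem (or the stronger Bott theorem, which also gives vanishing in higher cohomology) then yields
\[
H^0(Gr(k,V),\, S_\lambda \mathcal{R}^*) \;\cong\; V_\lambda,
\]
the irreducible $\GL(V)$-representation of highest weight $\lambda$. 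The final identification $V_\lambda \cong (S_\lambda V)^*$ follows from the standard fact that in characteristic zero $(S_\lambda V)^* \cong S_\lambda V^*$ as $\GL(V)$-modules.

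The routine content is the functoriality of the induction-from-$P$ construction with respect to Schur functors and the verification of dominance of the weight, neither of which is subtle. The main obstacle, and really the only thing requiring care, is the convention bookkeeping: making sure the dual on the bundle side (i.e.\ $\mathcal{R}^*$ rather than $\mathcal{R}$) is tracked consistently through to the dual on the representation side, so that one lands on $(S_\lambda V)^*$ and not $S_\lambda V$. The sanity check is the baseline case $k=1$, $\lambda = (1)$: here $Gr(1,V) = \mathbb{P}(V)$, $\mathcal{R}^* = \mathcal{O}(1)$, and $H^0(\mathbb{P}(V), \mathcal{O}(1)) = V^* = (S_{(1)}V)^*$, confirming the convention.
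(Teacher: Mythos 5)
First, note that the paper does not prove this statement at all: it is quoted verbatim from Weyman \cite[Corollary 4.1.9]{we}, so there is no internal proof to compare against. Your outline is the standard derivation of that corollary from the general Borel--Weil (or Bott) theorem for $G/P$, and it is the right approach: realize $Gr(k,V)=\GL(V)/P$, observe that $\mathcal{R}$ is the homogeneous bundle attached to the $P$-representation $W_0$ (the $P$-action on $W_0$ factors through the Levi factor $\GL(W_0)$), that the associated-bundle construction commutes with Schur functors, and then read off $H^0$ from the extreme weight of the inducing representation $S_\lambda W_0^*$.

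The one place where your bookkeeping does not yet cohere is exactly the place you flag as delicate. As written, you assign $S_\lambda W_0^*$ the highest weight $(\lambda_1,\dots,\lambda_k,0,\dots,0)$ and conclude $H^0\cong V_\lambda\cong (S_\lambda V)^*$. But the weights of $S_\lambda W_0^*$ are the negatives of those of $S_\lambda W_0$, so its extreme weights are permutations of $(-\lambda_1,\dots,-\lambda_k,0,\dots,0)$, never $(\lambda_1,\dots,\lambda_k,0,\dots,0)$ unless $\lambda=\emptyset$; and in the standard convention the irreducible $\GL(V)$-module of highest weight $(\lambda_1,\dots,\lambda_k,0,\dots,0)$ is $S_\lambda V$ itself, which is not isomorphic to $(S_\lambda V)^*$ as a $\GL(V)$-module (the central characters already differ). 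The correct target weight is $(0,\dots,0,-\lambda_k,\dots,-\lambda_1)$, the highest weight of $(S_\lambda V)^*$, consistent with your $\mathbb{P}(V)$ sanity check where $H^0(\mathbb{P}(V),\mathcal{O}(1))=V^*$ has highest weight $(0,\dots,0,-1)$. A clean way to avoid all sign conventions once Borel--Weil guarantees that $H^0(Gr(k,V),S_\lambda\mathcal{R}^*)$ is an irreducible $\GL(V)$-module: the restriction map $(S_\lambda V)^*\to H^0(Gr(k,V),S_\lambda\mathcal{R}^*)$, $s\mapsto (W\mapsto s|_{S_\lambda W})$, is a nonzero $\GL(V)$-equivariant map from the irreducible module $(S_\lambda V)^*$ to an irreducible module, hence an isomorphism by Schur's lemma. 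With that repair the argument is complete and is essentially the proof behind the paper's citation.
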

Tevelev used the Borel-Weil theorem to generalize the notion of isotropic subspace for symmetric bilinear forms: given $s\in Sym^dV^*$ or $s\in\Lambda^dV^*$, a subspace $W$ of $V$ is {\bf isotropic with respect to $s$} if $s|_{W}=0$. We generalize the definition even further for Schur modules (and this is compatible with the definition for multilinear forms):

\begin{definition} Let $\lambda$ be a partition. For $s\in(S_\lambda V)^*$, a subspace $W$ of $V$ is {\bf isotropic with respect to $s$} if $s|_{S_\lambda W}=0$.
\end{definition}

We answer the following question: for generic $s\in(S_\lambda V)^*$, when does there exist an isotropic subspace $W$ of $V$ with respect to $s$? Tevelev gives necessary and sufficient conditions for the existence of isotropic subspaces with respect to symmetric or skew-symmetric multilinear forms in Theorem~\ref{thm:Tevelev-main}. One could answer this question using the fact that $s\in H^0(Gr(k,V),S_\lambda\mathcal{R}^*)$ is $k$-isotropic if and only if $c_{top}(S_\lambda\mathcal{R}^*)\neq0$, but computing the top Chern class is hard in general:

\begin{example} Let $V=\bC^7$ and $k=5$, and take $s\in\Lambda^3V^*$. By the splitting principle, there exist  line bundles $L_1,\dots,L_5$ from the flag bundle associated with the tautological subbundle of $Gr(5,7)$ such that
\[c(\Lambda^3\mathcal{R}^*)=c\left(\sum_{1\leq i<j<k\leq 5}L_i^{-1}L_j^{-1}L_k^{-1}\right).\]

If the Chern roots of $\mathcal{R}^*$ are denoted by $x_i$'s, then
\[c_{top}(\Lambda^3\mathcal{R}^*)=\prod_{1\leq i<j<k\leq5}\left(x_i+x_j+x_k\right).\]

The Borel presentation of the cohomology ring of $Gr(5,7)$ gives us \[H^*(Gr(5,7))\otimes\bQ=\bQ[x_1,\dots,x_7]^{S_5\times S_2}/I\]
which is the ring of invariant polynomials where $S_5$ acts on $x_1,\dots,x_5$ and $S_2$ acts on $x_6,x_7$, which we then mod out by the ideal $I$ of all positive degree symmetric functions \cite[138]{mn}
Therefore, it is enough to determine whether $c_{top}(\Lambda^3\mathcal{R}^*)$ is in $\left\langle p_1,\dots,p_7\right\rangle$, the ideal generated by the power sum symmetric polynomials in $x_1,\dots,x_7$. This is easily answered using Macaulay2, but difficult by hand:
\begin{verbatim}
QQ[x_1..x_7]
p = k -> sum(apply(7,i->x_(i+1)^k))
f = product(apply(subsets(toList(1..5), 3), s->x_(s_0) + x_(s_1)
 + x_(s_2)));
I = ideal(f)
J = ideal(apply(7, i-> p(i+1)));
isSubset(I,J)
\end{verbatim}

The output is true, and hence the top Chern class is 0, so there does not exist a 5-dimensional isotropic subspace of $V$ with respect to a generic $s$. We arrive at the same conclusion using Tevelev's theorem:
\end{example}

\begin{theorem}[Tevelev] \label{thm:Tevelev-main}
Let $s\in S^dV^*$ or $s\in\Lambda^dV^*$ be a form in general position. The space $V$ contains a $k$-dimensional isotropic subspace with respect to $s$ if and only if
\[n\geq\frac{\binom{d+k-1}{d}}{k}+k\quad\text{or}\quad n\geq\frac{\binom{k}{d}}{k}+k,\quad\text{respectively},\]
with the following exceptions:
\begin{enumerate}
\item if $s\in S^2V^*$ or $s\in\Lambda^2V^*$ is a form in general position, then $V$ contains a $k$-dimensional isotropic subspace if and only if $n\geq 2k$;
\item if $s\in\Lambda^{n-2}V^*$ is in general position and $n$ is even, then $V$ contains a $k$-dimensional isotropic subspace if and only if $k\leq n-2$;
\item if $s\in\Lambda^3V^*$ is in general position and $n=7$, then $V$ contains a $k$-dimensional isotropic subspace if and only if $k\leq4$.
\end{enumerate}
\end{theorem}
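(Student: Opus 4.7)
The plan is to translate the question into a computation on the Grassmannian $X := Gr(k,V)$ and then into a top Chern class computation for a vector bundle on $X$. By the Borel--Weil theorem, a form $s \in S^d V^*$ (resp.\ $\Lambda^d V^*$) corresponds to a global section $\sigma_s$ of $E := S^d \mathcal{R}^*$ (resp.\ $\Lambda^d \mathcal{R}^*$), with a $k$-dimensional $W \subseteq V$ being isotropic with respect to $s$ exactly when $\sigma_s(W) = 0$. In each case $E$ is globally generated, because its fiber over $W$ is a canonical quotient of the ambient space of global sections. For such a bundle the standard Bertini/transversality results give the key criterion: a generic section has nonempty zero locus if and only if the top Chern class $c_{top}(E) \in H^{2\rank E}(X,\bZ)$ is nonzero, since otherwise the class of $Z(\sigma_s)$ would have to vanish, forcing $Z(\sigma_s)$ to be empty.

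The necessary direction then follows from pure dimension: if $\rank E > \dim X = k(n-k)$, then $c_{top}(E) = 0$ for the trivial reason that the cohomology of $X$ vanishes above top degree $2k(n-k)$. Rearranging $\rank E \leq k(n-k)$ yields the bound $n \geq \rank(E)/k + k$, which specializes to $\binom{d+k-1}{d}/k + k$ in the symmetric case and $\binom{k}{d}/k + k$ in the skew case.

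For sufficiency, I would apply the splitting principle to write
\[
c_{top}(S^d \mathcal{R}^*) = \prod_{1 \leq i_1 \leq \cdots \leq i_d \leq k}(x_{i_1} + \cdots + x_{i_d}), \qquad c_{top}(\Lambda^d \mathcal{R}^*) = \prod_{1 \leq i_1 < \cdots < i_d \leq k}(x_{i_1} + \cdots + x_{i_d}),
\]
where $x_1, \ldots, x_k$ are the Chern roots of $\mathcal{R}^*$. Since this is a symmetric polynomial it represents a class in $H^*(X,\bZ)$ expressible in the Schur basis, and $c_{top}(E)$ is nonzero precisely when its coefficient on $s_{((n-k)^k)}$, the Schur polynomial representing $[\mathrm{pt}]$ on $Gr(k,n)$, is nonzero. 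My approach would be to carry out this expansion, either via Jacobi--Trudi together with a careful combinatorial analysis, or by an equivariant localization computation summing over the torus-fixed points of $X$ and their tangent weights.

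The main obstacle will be this Chern class computation, which (as the paper's Macaulay2 example underlines) is not elementary by hand. The three exceptions in the theorem should arise as precisely the cases where the relevant Schur coefficient unexpectedly vanishes: case (1) is controlled by the classical rank theory of nondegenerate bilinear and skew-bilinear forms, where a generic form attains maximal rank and isotropy is governed by linear algebra rather than the naive bundle dimension; case (2) follows from the isomorphism $\Lambda^{n-2}V^* \cong \Lambda^2 V \otimes \det V^*$, which reduces the problem to a twist of case (1); and case (3) reflects the $G_2$-invariant generic 3-form on $\bC^7$, whose unusually large stabilizer enforces an anomalous isotropy pattern. Each exception would be handled by a separate classical argument after establishing the main Chern-class criterion.
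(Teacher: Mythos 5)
This theorem is quoted from Tevelev and not reproved in the paper, but the machinery the paper does develop (Lemmas~\ref{lem:sufficient} and~\ref{lem:Tevelev-hard}) shows what a complete proof requires, and your proposal has a genuine gap at exactly the hard step. Your necessity direction is fine and is essentially Lemma~\ref{lem:sufficient}: for a globally generated bundle with $\rank\mathcal{E}>\dim X$, a generic section is nowhere vanishing, and rearranging $\rank\mathcal{E}\leq k(n-k)$ gives the stated numerical bound. The problem is sufficiency. You reduce it to showing that $c_{top}(S^d\mathcal{R}^*)$ (resp.\ $c_{top}(\Lambda^d\mathcal{R}^*)$) is nonzero in $H^*(Gr(k,n))$ whenever the bound holds, and then say you ``would carry out this expansion'' by Jacobi--Trudi or localization. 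That expansion \emph{is} the theorem, and you do not perform it; the paper's Example 2.2 exists precisely to illustrate that even one small instance ($\Lambda^3\mathcal{R}^*$ on $Gr(5,7)$) is a nontrivial computer calculation. Identifying the main obstacle and stopping is not a proof. There is also a smaller technical slip: when $\rank\mathcal{E}<\dim X$, nonvanishing of $c_{top}(\mathcal{E})$ is not detected by the coefficient of $s_{((n-k)^k)}$ (that class has degree $k(n-k)>\rank\mathcal{E}$); one needs some nonzero Schur coefficient $a_\mu$ with $\mu$ contained in the $k\times(n-k)$ rectangle. Finally, the three exceptional cases are dispatched with plausible heuristics (rank theory of quadrics, the identification $\Lambda^{n-2}V^*\cong\Lambda^2V\otimes\det V^*$, the generic $3$-form on $\bC^7$ with stabilizer $G_2$) rather than arguments.

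For contrast, Tevelev's actual route---the one the present paper adapts to general $\lambda$---avoids computing the Chern class altogether. One studies the incidence variety $\{(s,W):s|_W=0\}$ and shows that its projection to the space of forms is dominant via a fiber-dimension estimate; this reduces sufficiency to the elementary chain of inequalities $\dim(S_\lambda\bC^{k-i})\leq(k-i)(n-k-i)$ for $i=0,\dots,\min\{k,n-k\}$ (Lemma~\ref{lem:Tevelev-hard}), which in the symmetric and skew-symmetric cases become binomial-coefficient inequalities proved by induction (compare Lemma~\ref{lem:Tevelev-symm}), with the exceptional cases falling out as the finitely many failures of those inequalities. To complete your proposal you would need either to switch to that dimension count or to actually produce the Schur expansion of the top Chern class, including the analysis isolating the three exceptions.
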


\section{Main Theorem}

We give a similar criterion for all partitions $\lambda$ not included in Tevelev's theorem (except for $\lambda = \emptyset$, which is not interesting).

\begin{theorem}\label{thm:main}
Let $V$ be an $n$-dimensional vector space, $\lambda$ be a partition, and take $k\geq3$ such that $2\leq\ell(\lambda)\leq k$ and $\lambda_1\geq2$. Then a generic $s\in(S_\lambda V)^*$ is $k$-isotropic if and only if
\[n\geq\frac{\dim{(S_\lambda\bC^k)}}{k}+k.\]
\end{theorem}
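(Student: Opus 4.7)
The plan is to reduce the isotropic question to the nonvanishing of a top Chern class on the Grassmannian. By Borel--Weil, sections of $S_\lambda \mathcal{R}^*$ over $X = Gr(k,V)$ are exactly $(S_\lambda V)^*$, and an isotropic $W \subset V$ for $s$ is precisely a zero of the corresponding section. The bundle $S_\lambda \mathcal{R}^*$ is globally generated of rank $N := \dim S_\lambda \bC^k$ (dualize $S_\lambda W \injects S_\lambda V$ for each $k$-plane $W$), so a generic section has nonempty vanishing locus if and only if $c_N(S_\lambda \mathcal{R}^*) \in H^{2N}(X)$ is nonzero. The theorem thus becomes equivalent to
\[c_N(S_\lambda \mathcal{R}^*) \neq 0 \text{ in } H^*(Gr(k,n)) \iff n \geq N/k + k.\]

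The ``only if'' direction is automatic from degree: when $n < N/k + k$ one has $2N > 2k(n-k) = 2\dim X$, so the class vanishes for dimensional reasons. For ``if'', I would work in the Borel presentation $H^*(Gr(k,n)) = \bZ[x_1,\ldots,x_k]^{S_k}/I_n$, where $I_n$ is generated by $h_j$ for $j \geq n-k+1$. Since $I_{n+1} \subset I_n$, it suffices to verify nonvanishing at the minimal admissible value $n_0 := \lceil N/k \rceil + k$. By the splitting principle, if $x_1,\ldots,x_k$ are the Chern roots of $\mathcal{R}^*$,
\[c_N(S_\lambda \mathcal{R}^*) = \prod_{T \in \mathrm{SSYT}(\lambda,k)} L_T, \qquad L_T := \sum_{i=1}^{k} c_i^T\, x_i,\]
where $c_i^T$ is the multiplicity of the letter $i$ in $T$. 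The remaining task is to show $\prod_T L_T \notin I_{n_0}$, which I would do by expanding in the Schur polynomial basis and exhibiting a nonzero coefficient for some $\mu \subseteq (n_0-k)^k$.

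The hard part is this combinatorial positivity. The class $c_N(S_\lambda \mathcal{R}^*)$ is effective on $Gr(k,n_0)$ (the degeneracy locus of any section), so its in-rectangle Schur coefficients are a priori non-negative; the point is to produce a strictly positive one. When $k \mid N$, the only admissible shape is the full rectangle $(N/k)^k$, so the question reduces to strict positivity of the intersection number $\int_{Gr(k,n_0)} c_N(S_\lambda \mathcal{R}^*)$; otherwise one must instead find a smaller Schubert class surviving the relations. I plan to proceed by induction on $\lambda$, adjoining one box at a time and using Pieri-type expansions to monitor how each new linear factor $L_T$ redistributes the Schur components. This is precisely where the Schur polynomial inequalities advertised in the introduction should enter, supplying the lower bounds needed to prevent cancellation. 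The hypotheses $\ell(\lambda) \geq 2$, $\lambda_1 \geq 2$, and $k \geq 3$ isolate the cases in which this uniform analysis applies; the single-row, single-column, and $k \leq 2$ cases are handled (or explicitly excluded) by Theorem~\ref{thm:Tevelev-main}.
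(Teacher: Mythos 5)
Your reduction and the easy direction are fine: the dimension-count argument for ``only if'' is essentially the paper's Lemma~\ref{lem:sufficient}, and the equivalence between $k$-isotropy of a generic section and $c_N(S_\lambda\mathcal{R}^*)\neq 0$ (for the globally generated bundle $S_\lambda\mathcal{R}^*$) is the standard fact the paper itself alludes to. The problem is the ``if'' direction, where your proof stops at a plan. Showing $\prod_T L_T\notin I_{n_0}$ is the entire content of the theorem, and what you offer --- ``induction on $\lambda$, adjoining one box at a time and using Pieri-type expansions to monitor how each new linear factor $L_T$ redistributes the Schur components'' --- is not a workable outline as stated: when you add a box to $\lambda$ the indexing set $\mathrm{SSYT}(\lambda,k)$ changes wholesale, the number of linear factors jumps by far more than one, and there is no evident recursion relating $c_N(S_\lambda\mathcal{R}^*)$ to the top Chern class for the smaller shape. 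Effectivity gives you nonnegativity of the in-rectangle Schur coefficients for free, but strict positivity is exactly the hard point, and Tevelev's own exceptional cases (e.g.\ $\Lambda^3\bC^7$ with $k=5$) show that the top Chern class of a globally generated $S_\lambda\mathcal{R}^*$ can vanish even when $N\leq\dim Gr(k,n)$; so no soft positivity or dimension argument can close this gap --- genuine combinatorial input specific to the shapes at hand is required, and you have not supplied it.

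For comparison, the paper does not attack the Chern class directly. It invokes Tevelev's incidence-variety criterion (Lemma~\ref{lem:Tevelev-hard}): a generic $s$ is $k$-isotropic provided $\dim(S_\lambda\bC^{k-i})\leq(k-i)(n-k-i)$ for all relevant $i$. This converts the problem into a chain of explicit numerical inequalities $\frac{s_\lambda(1^{k-i})}{k-i}\geq\frac{s_\lambda(1^{k-i-1})}{k-i-1}+1$, which are then proved by elementary semistandard-tableau counting and the horizontal-strip recursion $s_\lambda(1^k)=\sum_{\lambda/\mu\in HS}s_\mu(1^{k-1})$ (Lemmas~\ref{lem:weak-ineq} and~\ref{lem:strong-ineq}), with induction on $k$ rather than on $\lambda$. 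If you want to salvage your approach, you would either need to prove the positivity of a specific Schur coefficient of $\prod_T L_T$ directly (which no one has a clean handle on), or fall back on a degeneration/incidence argument of the type the paper uses; as written, your argument establishes only one direction of the theorem.
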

Notice that when rearranged, the inequality can be written as
\[\dim{(Gr(k,n))}\geq\dim{(S_{\lambda}\bC^k)}.\]

\begin{example} Let $V=\bC^6$ and $k=3$, and take $s\in S_{(2,1)}V^*$. By the splitting principle, we can write
\[c(S_\lambda\mathcal{R}^*)=c\left(\sum_T L_1^{-T(1)}L_2^{-T(2)}L_3^{-T(3)}\right)\]
where the $L_i$'s are line bundles, $T$ is a semistandard Young tableau of shape $\lambda$ with entries in $\{1,2,3\}$ (see \ref{def:SSYT} for the definition), and $T(i)$ is the number of boxes in $T$ labeled with $i$. If the Chern roots of $\mathcal{R}^*$ are denoted by $x_i$'s, then we can find out if the top Chern class is 0 by determining whether the product
\[(2x_1+x_2)(2x_1+x_3)(x_1+2x_2)(x_1+x_2+x_3)^2(x_1+2x_3)(2x_2+x_3)(x_2+2x_3)\]
is in $\left\langle p_1,\dots,p_6\right\rangle$, the ideal generated by the power sum symmetric polynomials in $x_1,\dots,x_6$.

This can be answered using Macaulay2:
\begin{verbatim}
QQ[x_1..x_6];
p = k -> sum(apply(6,i->x_(i+1)^k));
f = (2*x_1+x_2)*(2*x_1+x_3)*(x_1+2*x_2)*(x_1+x_2+x_3)^2
     *(x_1+2*x_3)*(2*x_2+x_3)*(x_2+2*x_3);
I = ideal(f);
J = ideal(apply(6, i-> p(i+1)));
isSubset(I,J)
\end{verbatim}

Our final output is false and hence there exists a 3-dimensional isotropic subspace with respect to a generic $s$. This verifies the conclusion of Theorem~\ref{thm:main} since
\[\dim{(Gr(3,6))}=9\geq8=\dim{(S_{(2,1)}\bC^3)}.\qedhere\]
\end{example}

For a geometric interpretation of Theorem~\ref{thm:main}, recall that the more general version of the Borel-Weil theorem \cite[Theorem 4.1.8]{we} says there exists a line bundle $\mathcal{L}(\lambda)$ such that $H^0(Flag_\lambda(V),\mathcal{L}(\lambda))=(S_\lambda V)^*$ as representations of $\GL(V)$. Then the zero locus of $s$ in $(S_\lambda V)^*$, denoted by $Z(s)$, is a subvariety of $Flag_\lambda(V)$. Therefore, we have the following consequence:
\begin{corollary}\label{cor:geometric}
Let $V$ be an $n$-dimensional vector space and $\lambda$ be a partition. For a generic $s\in (S_\lambda V)^*$, there exists a $k$-dimensional subspace $W$ of $V$ such that $Flag_\lambda(W)$ is in the zero locus of $s$ if and only if
\[n\geq\frac{\dim{(S_\lambda\bC^k)}}{k}+k.\]
\end{corollary}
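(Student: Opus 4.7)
The strategy is to translate the isotropy condition into a top Chern class computation and then into a symmetric polynomial non-vanishing statement. By Borel--Weil (the theorem cited above), a $k$-dimensional subspace $W$ is isotropic for $s\in(S_\lambda V)^*$ exactly when the corresponding section of $S_\lambda\mathcal{R}^*$ on $Gr(k,V)$ vanishes at the point $W$. Since $\mathcal{R}^*$ is a quotient of the trivial bundle $V^*\otimes\mathcal{O}$ and Schur functors preserve surjections, $S_\lambda\mathcal{R}^*$ is globally generated. A Bertini/Kleiman argument applied to the universal zero locus $\Sigma=\{(W,s):s(W)=0\}\subset Gr(k,V)\times(S_\lambda V)^*$, which is a vector subbundle of $Gr(k,V)\times(S_\lambda V)^*$ via the first projection, then yields the equivalence: a generic $s$ is $k$-isotropic if and only if $c_{top}(S_\lambda\mathcal{R}^*)\neq 0$ in $H^*(Gr(k,V);\bQ)$.

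The necessary direction follows immediately: $\rank(S_\lambda\mathcal{R}^*)=\dim(S_\lambda\bC^k)$, so $c_{top}$ lives in $H^{2\dim S_\lambda\bC^k}(Gr(k,V))$; if $n<\dim(S_\lambda\bC^k)/k+k$, this degree strictly exceeds $2\dim Gr(k,V)$ and the class vanishes for trivial dimensional reasons, hence a generic $s$ admits no $k$-isotropic subspace.

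For sufficiency, assume $n\geq\dim(S_\lambda\bC^k)/k+k$. By the splitting principle and the weight decomposition of the Schur functor,
\[
c_{top}(S_\lambda\mathcal{R}^*)=\prod_{T}\bigl(T(1)x_1+\cdots+T(k)x_k\bigr),
\]
where $T$ ranges over semistandard Young tableaux of shape $\lambda$ with entries in $\{1,\ldots,k\}$ and $x_1,\ldots,x_k$ are the Chern roots of $\mathcal{R}^*$. Via the Borel presentation $H^*(Gr(k,V);\bQ)=\bQ[x_1,\ldots,x_k]^{S_k}/(h_{n-k+1},\ldots,h_n)$, nonvanishing of $c_{top}$ is equivalent to the symmetric polynomial above having some nonzero Schur-basis coefficient $s_\mu$ with $\mu\subseteq(n-k)^k$. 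A convenient reduction is that it suffices to handle the extremal case $n_0=\lceil\dim(S_\lambda\bC^k)/k\rceil+k$: for any $n>n_0$, the $\GL(V)$-branching $S_\lambda(V\oplus\bC)=\bigoplus_\mu S_\mu V$ shows that the projection of a generic $s'\in(S_\lambda V')^*$ onto the $\mu=\lambda$ summand is generic in $(S_\lambda V)^*$, so any $k$-isotropic subspace for that projection remains isotropic for $s'$.

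The main obstacle is the combinatorial heart of the sufficient direction: showing that $\prod_T(T(1)x_1+\cdots+T(k)x_k)$ is not in the ideal $(h_{n_0-k+1},\ldots,h_{n_0})$. The hypotheses $\ell(\lambda)\geq 2$, $\lambda_1\geq 2$, and $k\geq 3$ are precisely what excludes the single-row and single-column shapes that Tevelev flags as exceptional --- exactly the settings where the analogous top Chern product can unexpectedly vanish in the Borel quotient. Outside these ranges, my plan is to isolate a specific Schubert coefficient (for instance that of a corner class built from a Schur polynomial indexed by a partition attached to $\lambda$) and to express it as a strictly positive weighted count over combinatorial data such as semistandard or column-strict tableaux of a related shape, then verify positivity via Schur polynomial identities and inequalities. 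Producing this count explicitly and uniformly across all admissible pairs $(\lambda,k)$ is where the combinatorial machinery promised in the introduction will do the real work.
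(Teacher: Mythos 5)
Your argument and the paper's are aimed at different targets. In the paper, Corollary~\ref{cor:geometric} is not given an independent proof at all: it is the one-line observation that, by the Borel--Weil theorem applied to $Flag_\lambda(V)$ and the line bundle $\mathcal{L}(\lambda)$, a subspace $W$ satisfies $Flag_\lambda(W)\subseteq Z(s)$ exactly when $s|_{S_\lambda W}=0$, so the corollary is a verbatim restatement of Theorem~\ref{thm:main} in geometric language. You instead set out to prove the underlying existence statement from scratch via the criterion that a generic $s$ is $k$-isotropic if and only if $c_{top}(S_\lambda\mathcal{R}^*)\neq 0$ on $Gr(k,V)$. That criterion is sound (the paper states it, and your necessity argument by degree reasons is fine), but it is precisely the route the paper declines to take because, as it says, computing the top Chern class is hard in general.

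The genuine gap is in your sufficiency direction. Everything before your last paragraph is setup; the actual content --- showing that $\prod_T\bigl(T(1)x_1+\cdots+T(k)x_k\bigr)$ does not lie in the ideal $(h_{n-k+1},\ldots,h_n)$ whenever $n\geq\dim(S_\lambda\bC^k)/k+k$ --- is announced only as a ``plan'' to ``isolate a specific Schubert coefficient'' and ``verify positivity,'' with no candidate coefficient identified and no positivity argument given. All of the difficulty lives there, and it is exactly what the paper circumvents: it never touches the Chern class, instead invoking Tevelev's dimension-count criterion (Lemma~\ref{lem:Tevelev-hard}) and reducing sufficiency to the chain of Schur-polynomial inequalities
\[\frac{s_\lambda(1^{k-i})}{k-i}\geq\frac{s_\lambda(1^{k-i-1})}{k-i-1}+1,\]
established combinatorially in Lemmas~\ref{lem:weak-ineq} and~\ref{lem:strong-ineq} via horizontal strips and Pieri's rule, together with a separate treatment of the boundary case $i=k-\ell(\lambda)$. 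Without an executed version of your Schubert-coefficient count, the proposal does not prove the corollary. A smaller point: the corollary implicitly carries the hypotheses $k\geq3$, $2\leq\ell(\lambda)\leq k$, $\lambda_1\geq2$ from Theorem~\ref{thm:main}; your remark that these merely exclude Tevelev's exceptional shapes is the right intuition, but it is not a substitute for the missing combinatorial argument.
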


The forward direction of Theorem~\ref{thm:main} is implied by the following general fact, denoted here by Lemma~\ref{lem:sufficient}, which Tevelev also uses in the analogous direction of his proof. In the case of our theorem, $X$ is our Grassmannian and $\mathcal{E}=S_\lambda\mathcal{R}^*$ in the lemma below. Notice $S_\lambda\mathcal{R}^*$ is generated by global sections, i.e. for any $W\in Gr(k,n)$, the map $H^0(Gr(k,n),S_\lambda\mathcal{R}^*)\to (S_\lambda W)^*$, where $s\mapsto s|_{S_\lambda W}$, is surjective. This is true because $H^0(Gr(k,n),S_\lambda\mathcal{R}^*)=(S_\lambda\bC^n)^*$ and $S_\lambda W\hookrightarrow S_\lambda\bC^n$ is injective.

\begin{lemma} \label{lem:sufficient}
Let $X$ be a connected variety of dimension $n$ and $\mathcal{E}$ be a rank $r$ vector bundle on $X$. Assume $\mathcal{E}$ is generated by global sections. If $r>n$, then $Z(s)=\emptyset$ for almost all $s\in H^0(X,\mathcal{E})$.
\end{lemma}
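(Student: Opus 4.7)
The plan is to use a standard incidence variety dimension count. Consider
\[I = \{(x,s) \in X \times H^0(X,\mathcal{E}) : s(x) = 0\},\]
equipped with its two natural projections $\pi_X \colon I \to X$ and $\pi_H \colon I \to H^0(X,\mathcal{E})$. The strategy is to bound $\dim I$ by analyzing the fibers of $\pi_X$, and then to push forward along $\pi_H$ to conclude that its image cannot fill $H^0(X,\mathcal{E})$.

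First I would fix $x \in X$ and examine the fiber $\pi_X^{-1}(x)$. This fiber is exactly the kernel of the evaluation map $\mathrm{ev}_x \colon H^0(X,\mathcal{E}) \to \mathcal{E}_x$. Because $\mathcal{E}$ is generated by global sections, $\mathrm{ev}_x$ is surjective onto the $r$-dimensional fiber $\mathcal{E}_x$, so $\pi_X^{-1}(x)$ is a linear subspace of codimension exactly $r$ in $H^0(X,\mathcal{E})$. Since all fibers of $\pi_X$ have the same dimension and $X$ is connected (hence $I$ is a vector bundle over $X$, in particular irreducible of pure dimension), I conclude
\[\dim I = \dim X + \dim H^0(X,\mathcal{E}) - r = n + \dim H^0(X,\mathcal{E}) - r.\]

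Next I would project along $\pi_H$. The assumption $r > n$ gives
\[\dim \pi_H(I) \leq \dim I = \dim H^0(X,\mathcal{E}) - (r - n) < \dim H^0(X,\mathcal{E}),\]
so $\pi_H(I)$ is contained in a proper closed subvariety of $H^0(X,\mathcal{E})$ (its closure). Any $s$ in the complement of this closed set satisfies $Z(s) = \emptyset$, which is the desired genericity statement.

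There is no serious obstacle here; the only subtlety is ensuring that $I$ really has the asserted dimension, which requires both that the evaluation maps are surjective (global generation) and that $X$ is irreducible (connectedness of a variety, in the sense of the excerpt). The global-generation hypothesis is what forces the fiber codimension to equal $r$ rather than merely bounding it, and this is the decisive input that makes the naive count actually work.
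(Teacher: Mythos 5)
Your proposal is correct and is essentially the paper's own argument: the same incidence variety, the same fiber-dimension count via surjectivity of the evaluation maps, and the same conclusion that the projection to $H^0(X,\mathcal{E})$ cannot be dominant. No further comment is needed.
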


\begin{proof} Define $Z=\{(s,x)\in H^0(X,\mathcal{E})\times X\colon s(x)=0\}$, and let $\pi_1\colon Z\to H^0(X,\mathcal{E})$, $\pi_2\colon Z\to X$ be projection maps. Let $ev_x\colon H^0(X,\mathcal{E})\to\mathcal{E}_x$ take $s\mapsto s(x)$ for $x\in X$. By definition, for any $s\in H^0(X,\mathcal{E})$ and $x\in X$,
\[\pi_1^{-1}(s)\cong Z(s)\]
\[\pi_2^{-1}(x)=\{x\in H^0(X,\mathcal{E})\colon s(x)=0\}=\ker{ev_x}.\]

Since $\mathcal{E}$ is generated by global sections, $ev_x$ is surjective and hence
\[\dim{\pi_2^{-1}(x)}=(\dim{H^0(X,\mathcal{E})})-r.\]

Since $(0,x)\in Z$ for all $x\in X$, $\pi_2$ is a surjective map between irreducible varieties,
\[\dim{Z}=\dim{X}+\max_{x\in X}\{{\dim{\pi_2^{-1}(x)}}\}=n+(\dim{H^0(X,\mathcal{E})})-r\]
so $\dim{Z}<\dim{H^0(X,\mathcal{E})}$.
This implies $\pi_1$ is not surjective, and hence $\overline{\pi_1(Z)}$ is a closed proper subvariety of $H^0(X,\mathcal{E})$. Hence, if $s$ is in the open subset $H^0(X,\mathcal{E})\setminus\overline{\pi_1(Z)}$, then $Z(s)=\pi_1^{-1}(s)=\emptyset$.
\end{proof}

Notice that in the proof of Lemma~\ref{lem:sufficient}, $\pi_2$ is a projective map because it can be factored as $Z\to\bP^n\times X\to X$ where the first map is an isomorphism of $Z$ onto a closed subvariety of $\bP^n\times X$, and the second map is the projection of $\bP^n\times X$ onto $X$. Therefore, $\pi_2(Z)$ is closed, so we have the following corollary:
\begin{corollary}\label{cor:removegeneric} Under the same assumptions as Theorem~\ref{thm:main}, if $n\geq\frac{\dim{(S_\lambda\bC^k)}}{k}+k$, then every $s\in(S_\lambda V)^*$ is $k$-isotropic.
\end{corollary}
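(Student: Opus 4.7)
The plan is to upgrade the ``generic'' statement of Theorem~\ref{thm:main} to an ``every'' statement by building the universal incidence variety and exploiting the fact, noted in the remark preceding the corollary, that the projection to the space of sections is a closed map. Irreducibility of $(S_\lambda V)^*$ will then force a closed subset containing a dense open subset to equal the whole space.

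Concretely, I would form the incidence variety
\[
Z \;:=\; \{(s,W)\in (S_\lambda V)^*\times Gr(k,V)\colon s|_{S_\lambda W}=0\},
\]
which is a closed subvariety of $(S_\lambda V)^*\times Gr(k,V)$, cut out by the vanishing of the tautological map $S_\lambda V\to S_\lambda(V/\mathcal{R})$ pulled back by $s$ (equivalently, by the vanishing of the composite $s\circ(S_\lambda\mathcal{R}\hookrightarrow S_\lambda V)$). Let $\pi_1\colon Z\to (S_\lambda V)^*$ and $\pi_2\colon Z\to Gr(k,V)$ denote the two projections. As observed in the remark before the statement of Corollary~\ref{cor:removegeneric}, $\pi_1$ factors as a closed immersion $Z\hookrightarrow (S_\lambda V)^*\times Gr(k,V)$ followed by the projection to $(S_\lambda V)^*$; since $Gr(k,V)$ is projective (hence complete), this projection is a closed map, and therefore $\pi_1$ is a projective morphism. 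In particular $\pi_1(Z)$ is closed in $(S_\lambda V)^*$.

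Now I would invoke Theorem~\ref{thm:main}: the assumption $n\geq\frac{\dim(S_\lambda\bC^k)}{k}+k$ guarantees that a generic $s\in(S_\lambda V)^*$ is $k$-isotropic, so $\pi_1(Z)$ contains a non-empty Zariski-open subset $U\subset(S_\lambda V)^*$. Since $(S_\lambda V)^*$ is an affine space and hence irreducible, $U$ is dense; therefore $\pi_1(Z)$, being a closed subset containing a dense subset, must equal all of $(S_\lambda V)^*$. Thus every $s\in(S_\lambda V)^*$ lies in $\pi_1(Z)$, i.e.\ admits some $W$ with $s|_{S_\lambda W}=0$, which is exactly the definition of being $k$-isotropic.

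There is no real obstacle here: the argument is the standard ``closed plus dense open equals everything'' observation, and the only non-formal ingredient (the ``generic'' part of Theorem~\ref{thm:main}) has already been established. The slightly delicate point to check carefully is simply the properness/projectivity of $\pi_1$, which reduces to completeness of the Grassmannian; everything else is bookkeeping.
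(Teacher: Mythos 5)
Your argument is correct and is essentially the paper's own: the paper justifies the corollary by the remark preceding it, namely that the projection from the incidence variety to the space of sections is a projective (hence closed) map because the Grassmannian is complete, and then closedness of the image together with the density of the generic locus supplied by Theorem~\ref{thm:main} forces the image to be all of $(S_\lambda V)^*$. Your write-up is in fact a cleaner rendering of the same idea (the paper's remark misnames the relevant projection as $\pi_2$ and garbles the factorization), but the content is identical.
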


To prove the reverse direction of Theorem~\ref{thm:main}, we use a general version of a lemma by Tevelev \cite[p.849]{tv}.

\begin{lemma}[Tevelev] \label{lem:Tevelev-hard} Let $V$ be an $n$-dimensional vector space and $\lambda$ be a partition. If
\begin{equation}
\dim{(S_\lambda\bC^{k-i})}\leq(k-i)(n-k-i)\label{Tevelev-ineq}
\end{equation}
for all $i=0,\dots,\min{\{k,n-k\}}$, then for generic $s\in(S_\lambda V)^*$, $V$ contains a $k$-dimensional isotropic subspace with respect to $s$.
\end{lemma}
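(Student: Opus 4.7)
The plan is to mimic the incidence variety argument used to prove Lemma~\ref{lem:sufficient}, but now with the roles of the projections reversed. Define
\[
\tilde Z = \{(s, W) \in (S_\lambda V)^* \times \Gr(k, V) : s|_{S_\lambda W} = 0\},
\]
with projections $\pi_1$ and $\pi_2$ to the two factors. Dualising the injection $S_\lambda W \hookrightarrow S_\lambda V$ (already used in the discussion preceding Lemma~\ref{lem:sufficient}) gives a surjection $(S_\lambda V)^* \twoheadrightarrow (S_\lambda W)^*$, so $\pi_2$ realises $\tilde Z$ as a sub-vector-bundle of $(S_\lambda V)^* \times \Gr(k,V)$ of rank $\dim(S_\lambda V) - \dim(S_\lambda \bC^k)$. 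Hence $\tilde Z$ is irreducible and
\[
\dim \tilde Z = k(n-k) + \dim(S_\lambda V) - \dim(S_\lambda \bC^k) \geq \dim (S_\lambda V)^*,
\]
where the final inequality is the $i = 0$ case of (\ref{Tevelev-ineq}). The map $\pi_1$ is projective (since $\Gr(k, V)$ is), so $\pi_1(\tilde Z)$ is closed, and it suffices to show $\pi_1$ is dominant. By upper-semicontinuity of fibre dimension together with the fibre-dimension theorem, dominance follows from exhibiting a single $s_0 \in (S_\lambda V)^*$ whose isotropic locus $\{W : s_0|_{S_\lambda W} = 0\}$ is nonempty of dimension exactly $k(n-k) - \dim(S_\lambda \bC^k)$.

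To construct such an $s_0$, induct on $k$. The multi-index hypothesis (\ref{Tevelev-ineq}) is stable under $(n, k) \mapsto (n-1, k-1)$: the bound $\dim(S_\lambda \bC^{(k-1) - j}) \leq (k-1-j)((n-1)-(k-1)-j) = (k-1-j)(n-k-j)$ needed for $(n-1, k-1)$ follows from the $i = j+1$ instance of the hypothesis for $(n, k)$, since $n-k-j-1 \leq n-k-j$. For the base case $k = 1$: either $\ell(\lambda) \geq 2$, in which case $S_\lambda \bC = 0$ and every line is trivially isotropic, or $\lambda = (d)$ is a single row and the isotropic locus is the degree-$d$ hypersurface $\{[v] \in \bP V : s_0(v^d) = 0\}$, nonempty of expected dimension $n - 2$ because $n \geq 2$ by hypothesis. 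For the inductive step, fix a hyperplane $V' \subset V$, apply induction to the restriction $s|_{(S_\lambda V')^*}$ to obtain an $s'_0$ with an isotropic $(k-1)$-dimensional $W' \subset V'$ whose fibre has the expected dimension, then lift to $s_0 \in (S_\lambda V)^*$ and extend $W'$ to a $k$-dimensional isotropic $W = W' \oplus L$ in $V$.

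The principal obstacle is the extension step. Since $s_0|_{S_\lambda W'} = 0$ already, the new conditions from $s_0|_{S_\lambda W} = 0$ lie in the Pieri branching quotient $S_\lambda W / S_\lambda W'$ of dimension $\dim(S_\lambda \bC^k) - \dim(S_\lambda \bC^{k-1})$, and balancing these against the $n - k$ free parameters for $L$ does \emph{not} follow from the $i = 0$ hypothesis alone. Rather, it requires the \emph{entire} multi-index hypothesis applied inductively to all intermediate subspaces $V^{(i)} \subset V$ of codimension $i$: the $i$-th inequality rules out positive-dimensional ``excess'' components of the isotropic locus supported inside such $V^{(i)}$, which would otherwise inflate the fibre of $\pi_1$ above the expected dimension. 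Making this bookkeeping precise via the branching decomposition of $S_\lambda$ under $\GL_{k-i} \hookrightarrow \GL_k$ is the technical heart of the argument.
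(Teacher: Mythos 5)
First, a point of comparison: the paper does not prove this lemma at all --- it is imported verbatim from Tevelev with the citation \cite[p.849]{tv} --- so there is no in-paper argument to measure your proposal against. Judged on its own terms, your setup is the correct and standard one: the incidence variety $\tilde Z$, its vector-bundle structure over $Gr(k,V)$ via the surjectivity of $(S_\lambda V)^*\to(S_\lambda W)^*$, the resulting irreducibility and dimension count, the properness of $\pi_1$, and the reduction of the lemma to dominance of $\pi_1$, which in turn reduces to exhibiting one $s_0$ whose isotropic locus is nonempty of the expected dimension $k(n-k)-\dim(S_\lambda\bC^k)$. All of that is sound (and the $i=0$ inequality is exactly what makes the expected dimension nonnegative). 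Your observation that the hypothesis is stable under $(n,k)\mapsto(n-1,k-1)$ also checks out, including at the top index $j=\min\{k-1,n-k\}$.

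The genuine gap is that you never carry out the one step where the lemma has content. Your induction, as sketched, at best produces a single isotropic $W=W'\oplus L$ for some $s_0$; dominance requires bounding the dimension of the \emph{entire} fiber $\pi_1^{-1}(s_0)$, which could a priori have excess-dimensional components having nothing to do with $W$. You correctly identify that ruling these out is where the inequalities for $i\geq 1$ must enter --- via the stratification of isotropic $W'$ by $\dim(W\cap W')=k-i$, or equivalently the branching of $S_\lambda$ under $\GL_{k-i}\hookrightarrow\GL_k$ --- but you then declare this ``the technical heart of the argument'' and stop. That bookkeeping is not a routine verification: it is essentially all of Tevelev's proof beyond the trivial dimension count, and it is also where one sees why the single inequality at $i=0$ does not suffice. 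As it stands the proposal is a correct plan with the decisive step missing, not a proof; to complete it you would need to either execute the stratified dimension estimate for the fiber or replace the existence-of-$s_0$ route with an infinitesimal argument (surjectivity of $d\pi_1$ at a suitable point), which faces the same issue in a different guise.
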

In order to show that the inequalities above are satisfied, we compute $\dim{(S_\lambda\bC^{k-i})}$ by evaluating a Schur polynomial $s_\lambda$ in $(1,1,\dots,1)$ and applying tools from combinatorics.
\begin{definition}\label{def:SSYT} For a partition $\lambda$, a {\bf semistandard Young tableau} is a Young diagram of shape $\lambda$ filled with some positive integers so that rows are weakly increasing from left to right and columns are strictly increasing from top to bottom. If $\lambda$ is a partition, then $s_{\lambda}(1^n)$ is the number of semistandard Young tableaux with the shape $\lambda$ and filled with entries from $\{1,\dots,n\}$.

If $b$ is any box in $\lambda$, then the {\bf content of $b$} is $j-i$ if $b$ is in the $i$th row from top to bottom and the $j$th column from left to right; this is denoted by $c(b)$. The {\bf hook length at $b$} is the number of squares below and to the right of $b$, including $b$ once, denoted by $h(b)$.
\end{definition}

\begin{example} These are all possible semistandard Young tableaux of shape $\lambda=(2,1)$ filled with entries from $\{1,2,3\}$:
\[\tableau[s]{1&1\cr 2}\quad\tableau[s]{1&1\cr 3}\quad
\tableau[s]{1&2\cr 2}\quad\tableau[s]{1&2\cr 3}\quad
\tableau[s]{1&3\cr 2}\quad\tableau[s]{1&3\cr 3}\quad
\tableau[s]{2&2\cr 3}\quad\tableau[s]{2&3\cr 3}\quad.\]
Therefore, $s_{(2,1)}(1,1,1)=8$. The values for hook length and content for boxes of $\lambda$ are filled in below:
\[h\colon\tableau[s]{3&1\cr 1}\quad\quad c\colon\tableau[s]{0&1\cr -1}.\qedhere\]
\end{example}

\begin{theorem}[Hook-Content Formula] Let $\lambda$ be a partition and $b$ be any box in $\lambda$. Then
\[s_{\lambda}(1^n)=\prod_{b\in\lambda}\frac{n+c(b)}{h(b)}.\]
\end{theorem}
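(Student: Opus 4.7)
The plan is to prove the formula via its $q$-analogue (the principal specialization of the Schur polynomial) and then pass to the limit $q \to 1$. Starting from the bialternant formula
\[s_\lambda(x_1, \ldots, x_n) = \frac{\det(x_i^{\lambda_j + n - j})_{1 \leq i, j \leq n}}{\det(x_i^{n - j})_{1 \leq i, j \leq n}},\]
one cannot substitute $x_i = 1$ directly since both determinants vanish, so I would first substitute $x_i = q^{i-1}$ for a formal variable $q$. The denominator becomes an ordinary Vandermonde $\prod_{i < j}(q^{i-1} - q^{j-1})$, and after extracting common powers of $q$ from rows and columns of the numerator, one arrives at
\[s_\lambda(1, q, \ldots, q^{n-1}) = q^{n(\lambda)} \prod_{1 \leq i < j \leq n} \frac{1 - q^{\lambda_i - \lambda_j + j - i}}{1 - q^{j - i}},\]
where $n(\lambda) = \sum_i (i-1)\lambda_i$. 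This is nothing but the Weyl character formula for the irreducible $\GL_n$-representation $S_\lambda \bC^n$ evaluated at $\diag(1, q, \ldots, q^{n-1})$.

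The main step, and the principal obstacle, is the combinatorial re-indexing of the pair-product as a product over boxes of $\lambda$:
\[\prod_{1 \leq i < j \leq n} \frac{1 - q^{\lambda_i - \lambda_j + j - i}}{1 - q^{j - i}} = \prod_{b \in \lambda} \frac{1 - q^{n + c(b)}}{1 - q^{h(b)}}.\]
This is a bijective identity at the level of multisets of exponents: padding $\lambda$ with zero parts out to length $n$, one matches the numerator exponents $\{\lambda_i - \lambda_j + j - i : i < j\}$ with $\{n + c(b) : b \in \lambda\}$ together with cancelling factors from pairs $(i,j)$ with $i, j > \ell(\lambda)$, and similarly matches the denominator exponents $\{j - i : i < j\}$ with $\{h(b) : b \in \lambda\}$ after the same padding. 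The cleanest verification is a row-by-row induction on $\ell(\lambda)$: for each $i$, one checks that the quantities $\lambda_i - \lambda_j + j - i$ for $j$ ranging over $i+1, \ldots, n$ reassemble into the hook lengths of row $i$ of $\lambda$ (from those $j$ with $\lambda_j < \lambda_i$) together with factors that cancel against subsequent rows, and dually for the numerator exponents and contents.

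Finally I would take the limit $q \to 1$. Each factor $(1 - q^m)/(1 - q^k)$ tends to $m/k$ since $1 - q^m = (1-q)(1 + q + \cdots + q^{m-1})$; the $(1-q)$ factors cancel across the $|\lambda|$ numerator and $|\lambda|$ denominator terms, and the remaining sums evaluate to $m$ and $k$ at $q = 1$. The prefactor $q^{n(\lambda)}$ tends to $1$. Since $s_\lambda(1^n)$ equals the number of semistandard Young tableaux of shape $\lambda$ with entries in $\{1, \ldots, n\}$ by definition, this gives
\[s_\lambda(1^n) = \prod_{b \in \lambda} \frac{n + c(b)}{h(b)},\]
as claimed. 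An alternative would be to invoke the Weyl dimension formula for the irreducible polynomial representations of $\GL(V)$ directly, using $s_\lambda(1^n) = \dim S_\lambda \bC^n$, but the $q$-analogue route has the advantage of staying within the symmetric-function framework already in use.
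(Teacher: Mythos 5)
The paper does not prove this statement at all: the Hook--Content Formula is quoted as a classical fact (alongside the Fulton--Harris reference for $\dim S_\lambda\bC^n = s_\lambda(1^n)$) and is simply used. So there is no internal proof to compare against; what you have written is the standard textbook derivation via the principal specialization $s_\lambda(1,q,\dots,q^{n-1})$, and it is correct in outline. Two caveats. First, the paper's Definition~2.7 \emph{defines} $s_\lambda(1^n)$ as the number of semistandard tableaux, whereas you start from the bialternant formula; you are therefore implicitly invoking the (nontrivial, but classical) equivalence of the combinatorial and determinantal definitions of the Schur polynomial, and this should be flagged or cited. Second, the step you yourself identify as the main obstacle --- the multiset identity
\[\prod_{1\le i<j\le n}\frac{1-q^{\lambda_i-\lambda_j+j-i}}{1-q^{j-i}}=\prod_{b\in\lambda}\frac{1-q^{n+c(b)}}{1-q^{h(b)}}\]
--- is only sketched. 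The clean way to nail it down is the complementary-sets lemma: with $\mu_i=\lambda_i+n-i$, for each fixed row $i$ the multiset $\{\mu_i-\mu_j: j>i\}$ together with the hook lengths of row $i$ is exactly $\{1,2,\dots,\mu_i\}$; dividing by the corresponding identity for the empty partition leaves $\{n-i+1,\dots,\lambda_i+n-i\}=\{n+c(b): b\text{ in row }i\}$ in the numerator and the hook lengths in the denominator. With that lemma stated and proved (or cited to Macdonald or Stanley), your argument is complete; the final limit $q\to 1$ is handled correctly, including the count that numerator and denominator each contribute $|\lambda|$ factors of $(1-q)$.
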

We use the following well-known result \cite[p.77]{fr}:
\begin{theorem} Let $\lambda$ be a partition. Then
\[\dim{(S_\lambda(\bC^n))}=s_{\lambda}(1^n).\]
\end{theorem}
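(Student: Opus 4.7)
The plan is to use representation theory to interpret both sides as the same quantity, namely the value at the identity of the character of the Schur module viewed as a $\GL(V)$-representation. First I would recall that $S_\lambda V$ carries a natural $\GL(V)$-action (the Schur functor is functorial in $V$), and that its formal character $\chi_{S_\lambda V}(g)$, as a function of a diagonalizable element $g \in \GL(V)$ with eigenvalues $x_1, \dots, x_n$, is by definition a symmetric Laurent polynomial in the $x_i$. The central classical fact I would invoke is that this character equals the Schur polynomial:
\[
\chi_{S_\lambda V}\bigl(\diag(x_1,\dots,x_n)\bigr) = s_\lambda(x_1,\dots,x_n).
\]
Granting this identification, the theorem follows immediately by specializing at the identity matrix: $\dim(S_\lambda V) = \chi_{S_\lambda V}(I) = s_\lambda(1,\dots,1) = s_\lambda(1^n)$.

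The main content is therefore the character identity, and I would justify it by exhibiting an explicit basis of $S_\lambda V$ compatible with the weight decomposition under the diagonal torus. Concretely, using the Young symmetrizer construction, one shows that the images $e_T := c_\lambda \cdot (e_{T(1,1)} \otimes e_{T(1,2)} \otimes \cdots)$ of tensor products indexed by semistandard Young tableaux $T$ of shape $\lambda$ with entries in $\{1,\dots,n\}$ form a basis of $S_\lambda V$ (this is the content of the standard basis theorem, as in Fulton--Harris). Each such $e_T$ is a weight vector for the diagonal torus of weight $(\mu_1,\dots,\mu_n)$, where $\mu_i$ is the number of $i$'s in $T$, so the character reads
\[
\chi_{S_\lambda V}(\diag(x_1,\dots,x_n)) = \sum_{T} x_1^{\mu_1(T)} \cdots x_n^{\mu_n(T)},
\]
which is exactly the combinatorial definition of $s_\lambda(x_1,\dots,x_n)$.

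Setting $x_1 = \cdots = x_n = 1$ collapses the sum to the number of semistandard Young tableaux of shape $\lambda$ with entries in $\{1,\dots,n\}$, matching Definition~\ref{def:SSYT} and establishing $\dim(S_\lambda \bC^n) = s_\lambda(1^n)$. The main obstacle, were one to develop the argument from scratch, is proving that the $e_T$ indexed by SSYTs are linearly independent and span $S_\lambda V$; this is a nontrivial combinatorial-algebraic result (straightening / Garnir relations), which is why the paper simply cites it from Fulton.
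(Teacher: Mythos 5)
Your argument is correct, and it is essentially the standard proof that the paper itself elides: the paper states this theorem without proof, citing Fulton--Harris, and your character-theoretic argument (SSYT weight basis of $S_\lambda V$, character equals $s_\lambda(x_1,\dots,x_n)$, specialize at the identity) is exactly what that citation points to. You also correctly isolate the one genuinely nontrivial input --- that the vectors $e_T$ for semistandard $T$ form a basis --- and handle it the same way the paper does, by citation.
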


We can prove that $n\geq\frac{\dim{(S_\lambda\bC^k)}}{k}+k$ implies $\dim{(S_\lambda\bC^{k-i})}\leq(k-i)(n-k-i)$ for most values of $i\in\{0,\dots,\min{\{k,n-k\}}\}$ by showing that
\begin{equation}\label{eq:1}\frac{\dim{(S_\lambda\bC^{k-i})}}{k-i}\geq\frac{\dim{(S_\lambda\bC^{k-i-1})}}{k-i-1}+1.
\end{equation}
Tevelev uses induction to prove \eqref{eq:1}; for example, he gives the following lemma used for $Sym^dV^*$ where $d\geq 3$:

\begin{lemma} \label{lem:Tevelev-symm} If $d\geq 3$ and $\alpha\geq2$, then
\[\frac{\binom{d+\alpha-1}{d}}{\alpha}\geq
\frac{\binom{d+\alpha-2}{d}}{\alpha-1}+1.\]
\end{lemma}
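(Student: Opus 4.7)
The plan is to reduce the inequality to a cleaner form by direct factorial manipulation and then induct on $\alpha$. Writing both sides as single quotients of factorials,
\[\frac{\binom{d+\alpha-1}{d}}{\alpha} = \frac{(d+\alpha-1)!}{d!\,\alpha!}, \qquad \frac{\binom{d+\alpha-2}{d}}{\alpha-1} = \frac{(d+\alpha-2)!}{d!\,(\alpha-1)!},\]
so their ratio is $(d+\alpha-1)/\alpha$. Factoring out the second expression from the difference then yields the clean identity
\[\frac{\binom{d+\alpha-1}{d}}{\alpha} - \frac{\binom{d+\alpha-2}{d}}{\alpha-1} = \frac{\binom{d+\alpha-2}{d}}{\alpha-1}\left(\frac{d+\alpha-1}{\alpha} - 1\right) = \frac{(d-1)\binom{d+\alpha-2}{d}}{\alpha(\alpha-1)},\]
so the lemma is equivalent to the reduced inequality
\[(d-1)\binom{d+\alpha-2}{d} \geq \alpha(\alpha-1).\]

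I would establish this reduced inequality by induction on $\alpha \geq 2$. The base case $\alpha = 2$ reads $d-1 \geq 2$, which is exactly the hypothesis $d \geq 3$. For the inductive step, use the identity $\binom{d+\alpha-1}{d} = \binom{d+\alpha-2}{d}\cdot(d+\alpha-1)/(\alpha-1)$; multiplying the inductive hypothesis by $(d+\alpha-1)/(\alpha-1)$ gives
\[(d-1)\binom{d+\alpha-1}{d} \geq \alpha(d+\alpha-1),\]
and the target bound $(\alpha+1)\alpha$ then follows from $d+\alpha-1 \geq \alpha+1$, which holds whenever $d \geq 2$.

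The only real obstacle is spotting the factored form of the difference; once this is in hand the induction is mechanical. Note that the hypothesis $d \geq 3$ is used tightly at the base case $\alpha = 2$, while the inductive step needs only $d \geq 2$, so all the sharpness of the lemma is concentrated at the smallest value of $\alpha$. In particular, this shows directly that $\binom{d+\alpha-1}{d}/\alpha$ grows by strictly more than $1$ as $\alpha$ increases, which is the engine needed in the larger proof for verifying Tevelev's inequalities \eqref{Tevelev-ineq} inductively.
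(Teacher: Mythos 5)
Your proof is correct: the algebra reducing the difference to $\frac{(d-1)\binom{d+\alpha-2}{d}}{\alpha(\alpha-1)}$ checks out, the base case $\alpha=2$ uses $d\geq 3$ exactly, and the inductive step via $\binom{d+\alpha-1}{d}=\binom{d+\alpha-2}{d}\cdot\frac{d+\alpha-1}{\alpha-1}$ is valid. Note that the paper itself gives no proof of this lemma --- it is quoted from Tevelev's article --- so there is no in-paper argument to compare against; your self-contained verification is a fine substitute, and in fact your reduced form $(d-1)\binom{d+\alpha-2}{d}\geq\alpha(\alpha-1)$ could also be seen directly, since for $d\geq 3$ one has $\binom{d+\alpha-2}{d}\geq\binom{\alpha+1}{3}=\frac{(\alpha+1)\alpha(\alpha-1)}{6}\geq\frac{\alpha(\alpha-1)}{2}\geq\frac{\alpha(\alpha-1)}{d-1}$, dispensing with the induction entirely.
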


However, this quickly becomes difficult for general partitions. This can be seen in the following examples of hooks and rectangular partitions because $\dim{(S_\lambda\bC^n)}$ is no longer a single binomial coefficient. One can perform a painful induction in particular cases, but it is hard to generalize.
\begin{example} If $\lambda=(d,1)$ where $d\geq2$, then by the Hook-Content Formula,
\begin{align*}
s_{\lambda}(1^n)&=\frac{n}{d+1}\cdot\frac{n+1}{d-1}\cdot\frac{n+2}{d-2}\cdots\frac{n+d-1}{1}\cdot\frac{n-1}{1}\\
&=\frac{(n+d-1)(n+d-2)\cdots(n-1)}{(d+1)(d-1)!}\\
&=\frac{d(n-1)}{d+1}\binom{n+d-1}{d}. \qedhere
\end{align*}
\end{example}

\begin{example} If $\lambda=(d,d)$ where $d\geq2$, then
\[s_\lambda(1^{n})=\frac{n+d-1}{(n-1)(d+1)}\binom{n+d-2}{d}^2.\]

More generally, if $\lambda=(d,\dots,d)$ have $l$ parts where $d,l\geq 2$, then
\[s_\lambda(1^{n})=\left( (l-1)!\binom{n+d-l}{d}\right)^l\prod_{j=1}^{l-1}(n-j)^{j-l}\left(\frac{n+d-j}{j(d+l-j)}\right)^j.\qedhere\]
\end{example}

\section{Proof of Main Theorem}

Now, we give inequalities that will assist in proving \eqref{eq:1}.

\begin{lemma} \label{lem:weak-ineq}
Let $\lambda$ be a nonempty partition.
\begin{enumerate}
\item For $k\geq2$,
\begin{equation}\frac{s_{\lambda}(1^k)}{k}\geq\frac{s_{\lambda}(1^{k-1})}{k-1}. \label{eq:i}
\end{equation}
\item If, in addition, $2\leq\ell(\lambda)\leq k-1$, then
\begin{equation}\frac{s_{\lambda}(1^k)}{k}\geq\frac{s_{\lambda}(1^{k-1})}{k-1}+\frac{1}{k}.\label{eq:ii}
\end{equation}
\end{enumerate}
\end{lemma}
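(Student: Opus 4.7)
The plan is to base the entire argument on the Hook-Content Formula, which gives, whenever $s_\lambda(1^{k-1})\neq 0$,
\[
R \;:=\; \frac{s_\lambda(1^k)}{s_\lambda(1^{k-1})} \;=\; \prod_{b\in\lambda}\frac{k+c(b)}{k-1+c(b)}.
\]
For part (i), I would split on $\ell(\lambda)$ versus $k$. If $\ell(\lambda)\geq k$ then the box $(k,1)$ contributes a zero factor to $s_\lambda(1^{k-1})$ via Hook-Content, so $s_\lambda(1^{k-1})=0$ and the inequality is trivial. If $\ell(\lambda)\leq k-1$ then every denominator $k-1+c(b)\geq k-\ell(\lambda)\geq 1$ is positive, every factor is $\geq 1$, and the corner box $(1,1)$ (content $0$) contributes exactly $k/(k-1)$; hence $R\geq k/(k-1)$, which rearranges to~\eqref{eq:i}.

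For part (ii) I would strengthen the ratio bound by using the factors from the \emph{entire} first column rather than only the corner. Since $(i,1)$ has content $1-i$, these factors telescope to
\[
\prod_{i=1}^{\ell(\lambda)}\frac{k+1-i}{k-i} \;=\; \frac{k}{k-\ell(\lambda)},
\]
and all other factors are $\geq 1$, so $R\geq k/(k-\ell(\lambda))$. Rewriting~\eqref{eq:ii} as $(k-1)s_\lambda(1^k)-k\,s_\lambda(1^{k-1})\geq k-1$ and substituting the ratio bound reduces the task to verifying
\[
s_\lambda(1^{k-1}) \;\geq\; \frac{(k-1)(k-\ell(\lambda))}{k(\ell(\lambda)-1)}.
\]

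To handle this last inequality I would prove two subsidiary claims. First, the combinatorial lower bound $s_\lambda(1^{k-1})\geq\binom{k-1}{\ell(\lambda)}$: any strictly increasing sequence $a_1<\dots<a_{\ell(\lambda)}$ in $\{1,\dots,k-1\}$ yields an SSYT of shape $\lambda$ by filling row $i$ constantly with $a_i$, which defines an injection from $\binom{k-1}{\ell(\lambda)}$ sequences into the set of SSYT. Second, the purely algebraic inequality $\binom{k-1}{\ell}\geq(k-1)(k-\ell)/[k(\ell-1)]$ (writing $\ell=\ell(\lambda)$): after clearing denominators this becomes $k\binom{k-2}{\ell-2}\geq\ell$, which is immediate for $\ell=2$ (it reads $k\geq 2$) and for $\ell\geq 3$ follows from $\binom{k-2}{\ell-2}\geq 1$ together with $k\geq\ell+1>\ell$.

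The main obstacle I anticipate is recognizing that one must telescope the \emph{entire} first column in part (ii). A naive bound like $R\geq k/(k-1)$ from the corner alone is exactly enough for (i) but useless for (ii), and even $R\geq k/(k-2)$ from the first two corner boxes still fails for shapes such as $\lambda=(1,1,1)$ with $k=4$, where $s_\lambda(1^{k-1})=1$ leaves no slack; only the full $R\geq k/(k-\ell(\lambda))$ combined with the matching column-shape lower bound on $s_\lambda(1^{k-1})$ closes the gap. Once the right telescoping is identified, the rest is bookkeeping.
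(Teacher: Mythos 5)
Your proof is correct, but it takes a genuinely different route from the paper's. The paper argues purely combinatorially: it writes $s_\lambda(1^k)=g_\lambda(k)+s_\lambda(1^{k-1})$, where $g_\lambda(k)$ counts tableaux using at least one $k$, bounds $g_\lambda(k)\geq s_\mu(1^{k-1})$ for $\mu$ equal to $\lambda$ minus a corner box, and then applies Pieri's rule to $s_1\cdot s_\mu$ together with a direct count of at least $k-1$ tableaux of an auxiliary shape $\nu$. You instead work multiplicatively via the Hook-Content Formula: the hook lengths cancel in the ratio $R=s_\lambda(1^k)/s_\lambda(1^{k-1})$, the first-column factors telescope to $k/(k-\ell(\lambda))$, and the remaining factors are at least $1$; the lemma then reduces to the elementary bound $s_\lambda(1^{k-1})\geq\binom{k-1}{\ell(\lambda)}$ (constant-row fillings) plus arithmetic. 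I checked the details: the zero-denominator case $\ell(\lambda)\geq k$ in part (i) is handled correctly, the reduction of \eqref{eq:ii} to $s_\lambda(1^{k-1})\geq(k-1)(k-\ell)/[k(\ell-1)]$ is right, and the binomial identity $\binom{k-1}{\ell}=\frac{(k-1)(k-\ell)}{\ell(\ell-1)}\binom{k-2}{\ell-2}$ does turn the last step into $k\binom{k-2}{\ell-2}\geq\ell$, which holds under $2\leq\ell\leq k-1$. Your approach buys a stronger, sharp multiplicative statement $s_\lambda(1^k)/s_\lambda(1^{k-1})\geq k/(k-\ell(\lambda))$ (with equality for column shapes), at the cost of leaning on the Hook-Content Formula and some positivity bookkeeping on denominators; the paper's argument is bijective and self-contained within tableau combinatorics, and its intermediate objects ($g_\lambda$, Pieri) are reused in spirit in Proposition~\ref{prop:HS} and Lemma~\ref{lem:strong-ineq}.
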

\begin{proof} To prove the second part of the lemma, notice \eqref{eq:ii} is equivalent to
\[k(s_{\lambda}
(1^k)-s_{\lambda}(1^{k-1}))\geq s_{\lambda}(1^k)+k-1.\]

Let $g_{\lambda}(k)$ be the number of semistandard Young tableaux with shape $\lambda$ with entries in $\{1, \dots, k\}$ and labeled with at least one $k$. Since
\[s_{\lambda}(1^k)=g_{\lambda}(k)+s_{\lambda}(1^{k-1}),\]
we can prove the equivalent statement
\[(k-1)g_{\lambda}(k)\geq s_{\lambda}(1^{k-1})+k-1.\]

Let $\mu$ be the subpartition of $\lambda$ obtained by removing the box in the last column of the last row of $\lambda$. Let $\nu$ be the partition obtained by adding a box to the end of the first row of $\mu$. Then
\begin{align*}
(k-1)g_{\lambda}(k)
&\geq(k-1)s_{\mu}(1^{k-1})\\
&=s_{1}(1^{k-1})s_{\mu}(1^{k-1})\\
&\geq s_{\lambda}(1^{k-1})+s_{\nu}(1^{k-1})\\
&\geq s_{\lambda}(1^{k-1})+k-1.
\end{align*}

If we label a partition of shape $\mu$ with entries in $\{1,\dots,k-1\}$, reattach a box to $\mu$ in order to obtain $\lambda$, and label this new box with $k$, then we obtain a semistandard Young tableau of shape $\lambda$ with entries in $\{1,\dots,k\}$; this proves the first line above. The second line is obvious. Since $\ell(\lambda)\geq2$, $\lambda\neq\nu$, so the third line follows by Pieri's rule. Since $\ell(\lambda)\leq k-1$, this implies that $\ell(\nu)\leq k-1$, so $s_\nu(1^{k-1})\neq0$. Moreover, we obtain a semistandard Young tableau if the last box in the first row of $\nu$ is filled with any integer in $\{1,\dots,k-1\}$; the remaining boxes in the first row are labeled with 1; and for the remaining rows, the boxes in the $i$th row are labeled with $i$. Therefore, we have at least $k-1$ semistandard Young tableaux of shape $\nu$ with entries in $\{1,\dots,k-1\}$, proving the fourth line.

Now we prove the first part of the lemma. It is clearly true when $\lambda=(1)$. Otherwise, we again choose $\mu$ to be the subpartition of $\lambda$ obtained by removing the box in the last column of the last row of $\lambda$. Notice that \eqref{eq:ii} is equivalent to
\[(k-1)g_\lambda(k)\geq s_{\lambda}(1^{k-1}).\]

Using a similar reasoning as above, we obtain
\begin{align*}
(k-1)g_{\lambda}(k)
&\geq(k-1)s_{\mu}(1^{k-1})\\
&=s_{1}(1^{k-1})s_{\mu}(1^{k-1})\\
&\geq s_{\lambda}(1^{k-1}).\qedhere
\end{align*}
\end{proof}

\begin{definition} Let $\lambda$ be a partition. If $\mu$ is a subpartition of $\lambda$ such that $\lambda/\mu$ is a skew shape whose columns contain at most one box each, then $\lambda/\mu$ is a {\bf horizontal strip}. We denote the collection of all horizontal strips by $HS$. \end{definition}

The following is a well-known fact:

\begin{proposition} \label{prop:HS} For any partition $\lambda$,
\[s_{\lambda}(1^k)=\sum_{\lambda/\mu\in HS}s_{\mu}(1^{k-1}).\]
\end{proposition}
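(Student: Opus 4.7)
The plan is to prove this identity bijectively, using the combinatorial definition of the Schur polynomial: when specialized at $(1,\dots,1)$, $s_\lambda(1^k)$ equals the number of semistandard Young tableaux (SSYT) of shape $\lambda$ with entries in $\{1,\dots,k\}$, as stated in Definition~\ref{def:SSYT}. I would partition this set of tableaux according to the positions of the largest entry, namely $k$, and show that each block of the partition corresponds to a choice of horizontal strip $\lambda/\mu$ together with an SSYT of shape $\mu$ with entries in $\{1,\dots,k-1\}$.

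Concretely, the first step is: given an SSYT $T$ of shape $\lambda$ with entries in $\{1,\dots,k\}$, let $\mu$ be the subshape obtained by deleting every box of $T$ that contains the entry $k$. Because columns of an SSYT are strictly increasing, no two boxes labeled $k$ can occupy the same column, so $\lambda/\mu$ has at most one box per column, meaning $\lambda/\mu \in HS$. The restriction $T|_\mu$ is then an SSYT of shape $\mu$ with entries in $\{1,\dots,k-1\}$. The second step is the inverse: given $\mu \subseteq \lambda$ with $\lambda/\mu$ a horizontal strip and an SSYT $T'$ of shape $\mu$ with entries in $\{1,\dots,k-1\}$, filling each box of $\lambda/\mu$ with the entry $k$ produces an SSYT of shape $\lambda$, since the added $k$'s appear at the right end of the rows they occupy (preserving weak row-increase) and are strictly larger than any previous entry in their columns (preserving strict column-increase).

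Once these two constructions are shown to be mutual inverses, the stated identity follows immediately by counting:
\[s_\lambda(1^k)=\sum_{\lambda/\mu \in HS}s_\mu(1^{k-1}).\]
There is no real obstacle here; the only fact requiring care is the assertion that deleting the $k$-boxes yields a valid Young subdiagram (i.e.\ $\mu$ really is a partition), and that $\lambda/\mu$ is a horizontal strip, both of which reduce to the column-strict condition on $T$. Alternatively, one could derive the same identity from Pieri's rule applied to $h_m \cdot s_\mu = \sum s_\lambda$ (summed over $\lambda \supseteq \mu$ with $\lambda/\mu$ a horizontal strip of size $m$), by summing over $m$ and evaluating at all ones; but the direct bijective argument is cleaner and self-contained.
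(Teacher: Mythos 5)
Your proof is correct and follows the same approach as the paper: partitioning the SSYT of shape $\lambda$ with entries in $\{1,\dots,k\}$ by the placement of the $k$'s, which necessarily form a horizontal strip by column-strictness. You simply spell out the bijection and its inverse in more detail than the paper does.
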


\begin{proof}
We can partition the collection of all semistandard Young tableau of shape $\lambda$ with entries in $\{1,\dots,k\}$ into subsets based on the placement of $k$'s. Since $k$ can appear at most once in each column of $\lambda$, the size of such a subset is the same as the number of semistandard Young tableau of some unique $\mu\subset\lambda$ such that $\lambda/\mu\in HS$ and labeled with entries in $\{1,\dots,k-1\}$.
\end{proof}

\begin{lemma} \label{lem:strong-ineq}
Let $\lambda$ be a partition and take $k\geq3$. If %$\ell(\lambda)\leq k-2$
$1\leq\ell(\lambda)\leq k-2$ and $\lambda\neq(1),(2),(1,1)$, then
\[\frac{s_{\lambda}(1^k)}{k}\geq\frac{s_{\lambda}(1^{k-1})}{k-1}+1.\]
%Otherwise, $s_{\lambda}(1^{k-1})=0$ and hence
%\[\frac{s_{\lambda}(1^{k-1})}{k-1}\leq\frac{s_{\lambda}(1^k)}{k}.\]
\end{lemma}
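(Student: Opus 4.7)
Following the template of Lemma \ref{lem:weak-ineq}(2), the plan is to rewrite the inequality as
\[(k-1)g_\lambda(k)\ \geq\ s_\lambda(1^{k-1})+k(k-1),\qquad g_\lambda(k):=s_\lambda(1^k)-s_\lambda(1^{k-1}),\]
and to extract the extra factor of $k$ (compared with Lemma \ref{lem:weak-ineq}) from the additional row of room that $\ell(\lambda)\leq k-2$ provides.

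The case $\ell(\lambda)=1$ is immediate: $\lambda=(d)$ with $d\geq 3$, $s_\lambda(1^n)=\binom{n+d-1}{d}$, and the inequality coincides with Tevelev's Lemma \ref{lem:Tevelev-symm} for $\alpha=k$. For $2\leq\ell(\lambda)\leq k-2$, let $\mu$ be $\lambda$ minus its bottom-right box. Then $g_\lambda(k)\geq s_\mu(1^{k-1})$ and Pieri's rule gives
\[(k-1)\, s_\mu(1^{k-1})\ =\ \sum_\nu s_\nu(1^{k-1}),\]
the sum over outer corners $\nu$ of $\mu$. One of these is $\nu=\lambda$; the hypothesis $\ell(\mu)+1\leq k-1$ guarantees at least one additional corner $\nu$ with $s_\nu(1^{k-1})>0$. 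When $\lambda_\ell\geq 2$ both $\nu_1=\mu+(1,\lambda_1+1)$ (extend row $1$) and $\nu_2=\mu+(\ell+1,1)$ (new bottom row of length $1$) appear, leaving the combinatorial lower bound
\[s_{\nu_1}(1^{k-1})+s_{\nu_2}(1^{k-1})\ \geq\ k(k-1)\]
to be established.

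This bound is the principal obstacle. I would furnish enough SSYT of $\nu_1$ and $\nu_2$ by starting from the canonical filling (row $i$ labelled with $i$'s) and independently varying one or two boxes across $\{1,\dots,k-1\}$, producing on the order of $(k-1)(k-2)$ tableaux. Two subcases require individual attention: if $\lambda_\ell=1$ the new-row corner $\nu_2$ disappears, so I would instead work with the size-$2$ horizontal strip coming from $\mu^*=(\lambda_1,\dots,\lambda_{\ell-2},\lambda_{\ell-1}-1)$ (valid when $\lambda_{\ell-1}\geq 2$) and the $h_2$ Pieri identity; the residual case $\lambda=(1^\ell)$ has only a single extra term $s_{(2,1^{\ell-2})}(1^{k-1})$, and the hook-content formula gives the bound directly, tight at $k=\ell+2$ and $\ell=3$, with the next-smaller shape $(1,1)$ failing the analogous bound, as it must since the lemma explicitly excludes it.
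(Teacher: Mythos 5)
Your proposal takes a genuinely different route from the paper --- a direct, non-inductive argument via a single application of Pieri's rule to $\mu=\lambda$ minus its corner box --- but as written it has a real gap, and the gap sits exactly at the step you yourself flag as ``the principal obstacle.'' After reducing to $(k-1)g_\lambda(k)\geq s_\lambda(1^{k-1})+k(k-1)$ and bounding $g_\lambda(k)\geq s_\mu(1^{k-1})$, everything hinges on the extra Pieri terms summing to at least $k(k-1)$. You do not prove this; your own description of the construction yields ``on the order of $(k-1)(k-2)$'' tableaux, which is strictly less than the $k(k-1)$ you need. Moreover the bound is tight or false in small cases, so it cannot be waved through: for $\lambda=(2,1)$ and $k=5$ the single extra term is $s_{(3)}(1^4)=20=k(k-1)$ exactly; for $\lambda=(1^3)$ and $k=5$ one gets $s_{(2,1)}(1^4)=20=k(k-1)$ exactly; and for $\lambda=(2,1)$, $k=4$ the single extra term is $s_{(3)}(1^3)=10<12=k(k-1)$, so the main chain outright fails and the entire burden falls on your $\lambda_\ell=1$ workaround. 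That workaround is itself not sound as stated: the $h_2$ Pieri identity carries the coefficient $h_2(1^{k-1})=\binom{k}{2}$, not the factor $k-1$ available from $(k-1)g_\lambda(k)$, so $(k-1)s_{\mu^*}(1^{k-1})$ recovers only a $\tfrac{2}{k}$-fraction of $\sum_\nu s_\nu(1^{k-1})$ and in particular does not even return the full $s_\lambda(1^{k-1})$ term you must keep. To rescue it you would need the sharper bound $g_\lambda(k)\geq\sum_{\rho\neq\lambda,\ \lambda/\rho\in HS}s_\rho(1^{k-1})$ (which is in fact an equality, by Proposition~\ref{prop:HS}) and then a careful accounting of all the resulting Pieri terms --- at which point you are essentially rebuilding the paper's argument.

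For contrast, the paper avoids any sharp numerical estimate of individual Schur values by inducting on $k$ (base case $k=3$ is Tevelev's Lemma~\ref{lem:Tevelev-symm}) and expanding $s_\lambda(1^k)=\sum_{\lambda/\mu\in HS}s_\mu(1^{k-1})$ over \emph{all} horizontal strips, splitting the sum according to $\ell(\mu)$ and whether $\mu\in\{(1),(2),(1,1)\}$: the induction hypothesis handles the generic strips, Lemma~\ref{lem:weak-ineq} handles the rest, and the needed ``$+1$'' comes from counting at least one strip of each of two types together with the elementary inequality $(k-1)^2>k(k-2)$. If you want to pursue your direct approach, the missing content is precisely a proof of $\sum_{\nu\neq\lambda}s_\nu(1^{k-1})\geq k(k-1)$ uniformly over all admissible $\lambda$ and $k$, including the tight and exceptional cases above; without that the argument is a plan, not a proof.
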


\begin{proof} We perform induction on $k$. The case when $k=3$ corresponds to symmetric forms, which was proved by Tevelev's lemma \ref{lem:Tevelev-symm}.

Now let $k>3$ and $\lambda$ be a partition satisfying $1\leq\ell(\lambda)\leq k-2$ and not equal to $(1)$, $(2)$, or $(1,1)$. By induction, we suppose that for any partition $\mu$ not equal to $(1)$, $(2)$, or $(1,1)$, and satisfying $1\leq\ell(\mu)\leq k-3$, then
\[\frac{s_{\mu}(1^{k-1})}{k-1}\geq\frac{s_{\mu}(1^{k-2})}{k-2}+1.\]

If $\ell(\lambda)=1$, then we can use Lemma~\ref{lem:Tevelev-symm}. Otherwise, we use Proposition~\ref{prop:HS} several times in the computation below.

\begin{align}
\frac{s_{\lambda}(1^k)}{k}&=\frac{1}{k}\sum_{\lambda/\mu\in HS}s_{\mu}(1^{k-1})\notag\\
&=\frac{1}{k}\sum_{\substack{\lambda/\mu\in HS\\ 1\leq\ell(\mu)\leq k-3\\ \mu\neq(1),(2),(1,1)}}s_{\mu}(1^{k-1})+\frac{1}{k}\sum_{\substack{\lambda/\mu\in HS\\ \ell(\mu)=k-2\\ \mu\neq(1,1)}}s_{\mu}(1^{k-1})+\frac{1}{k}\sum_{\substack{\lambda/\mu\in HS\\ \mu=(1),(2),(1,1)}}s_{\mu}(1^{k-1})\notag\\
&=\frac{k-1}{k}\sum_{\substack{\lambda/\mu\in HS\\ 1\leq\ell(\mu)\leq k-3\\ \mu\neq(1),(2),(1,1)}}\frac{s_{\mu}(1^{k-1})}{k-1}+\frac{k-1}{k}\sum_{\substack{\lambda/\mu\in HS\\ \ell(\mu)=k-2\\ \mu\neq(1,1)}}\frac{s_{\mu}(1^{k-1})}{k-1}\\
&\qquad\qquad +\frac{k-1}{k}\sum_{\substack{\lambda/\mu\in HS\\ \mu=(1),(2),(1,1)}}\frac{s_{\mu}(1^{k-1})}{k-1}\notag\\
&\geq\frac{k-1}{k}\sum_{\substack{\lambda/\mu\in HS\\ 1\leq\ell(\mu)\leq k-3\\ \mu\neq(1),(2),(1,1)}}\left(\frac{s_{\mu}(1^{k-2})}{k-2}+1\right)+\frac{k-1}{k}\sum_{\substack{\lambda/\mu\in HS\\ \ell(\mu)=k-2\\ \mu\neq(1,1)}}\left(\frac{s_{\mu}(1^{k-2})}{k-2}+\frac{1}{k-1}\right)\label{line:1}\\
&\quad+\frac{k-1}{k}\sum_{\substack{\lambda/\mu\in HS\\ \mu=(1),(2),(1,1)}}\frac{s_{\mu}(1^{k-2})}{k-2}\notag\\
&=\frac{k-1}{k(k-2)}\sum_{\lambda/\mu\in HS}s_{\mu}(1^{k-2})+\sum_{\substack{\lambda/\mu\in HS\\ 1\leq\ell(\mu)\leq k-3\\ \mu\neq(1),(2),(1,1)}}\frac{k-1}{k}+\sum_{\substack{\lambda/\mu\in HS\\ \ell(\mu)=k-2\\ \mu\neq(1,1)}}\frac{1}{k}\notag\\
&=\frac{(k-1)^2}{k(k-2)}\frac{s_{\lambda}(1^{k-1})}{k-1}+\sum_{\substack{\lambda/\mu\in HS\\ 1\leq\ell(\mu)\leq k-3\\ \mu\neq(1),(2),(1,1)}}\frac{k-1}{k}+\sum_{\substack{\lambda/\mu\in HS\\ \ell(\mu)=k-2\\ \mu\neq(1,1)}}\frac{1}{k}\notag\\
&\geq\frac{s_{\lambda}(1^{k-1})}{k-1}+\sum_{\substack{\lambda/\mu\in HS\\ 1\leq\ell(\mu)\leq k-3\\ \mu\neq(1),(2),(1,1)}}\frac{k-1}{k}+\sum_{\substack{\lambda/\mu\in HS\\ \ell(\mu)=k-2\\ \mu\neq(1,1)}}\frac{1}{k}\label{line:2}\\
&\geq\frac{s_{\lambda}(1^{k-1})}{k-1}+\frac{k-1}{k}+\frac{1}{k}\label{line:3}\\
&=\frac{s_{\lambda}(1^{k-1})}{k-1}+1.\notag
\end{align}

Line \eqref{line:1} follows by the induction hypothesis on the first sum and Lemma~\ref{lem:weak-ineq} applied to the remaining sums. After rearranging terms and noticing $(k-1)^2>k(k-2)$, we have line~\eqref{line:2}. In Line~\eqref{line:2}, the first sum has at least one summand because we can let $\mu$ be the subpartition of $\lambda$ obtained by removing the last row of $\lambda$; and the second sum has at least one summand because we can take $\mu$ to be $\lambda$. This proves line~\eqref{line:3}.
\end{proof}

We can now prove finish the proof of the main theorem:

\begin{proof}[Proof of Theorem~\ref{thm:main}] Since $s_\lambda(1^j)=0$ for $j<\ell(\lambda)$, by Tevelev's Lemma~\ref{lem:Tevelev-hard} it suffices to show
\[\dim{(S_\lambda\bC^{k-i})}\leq(k-i)(n-k-i)\]
for all $i=0,\dots,k-\ell(\lambda)$.

Suppose $2\leq\ell(\lambda)<k$. Since $i\neq k$, if we assume
\[n\geq\frac{\dim{(S_\lambda\bC^k)}}{k}+k,\]
then using the Lemma~\ref{lem:strong-ineq},
\begin{align*}
n&\geq\frac{s_{\lambda}(1^k)}{k}+k\\
&\geq\frac{s_{\lambda}(1^{k-1})}{k-1}+k+1\\
&\geq\frac{s_{\lambda}(1^{k-2})}{k-2}+k+2\\
&\vdots\\
&\geq\frac{s_{\lambda}(1^{\ell(\lambda)+2})}{\ell(\lambda)+2}+k+k-\ell(\lambda)-2\\
&\geq\frac{s_{\lambda}(1^{\ell(\lambda)+1})}{\ell(\lambda)+1}+k+k-\ell(\lambda)-1.\\
\end{align*}
This proves the inequality~\eqref{Tevelev-ineq} for $i=0,\dots,k-\ell(\lambda)-1$.

If $\lambda$ is a rectangle, then
\[\dim{(S_\lambda\bC^{\ell(\lambda)})}=s_{\lambda}(1^{\ell(\lambda)})=1\leq\ell(\lambda)(n-\ell(\lambda))\]
because $\ell(\lambda)>1$, so the inequality for $i=k-\ell(\lambda)$ is true.

If $\lambda$ is not a rectangle, then let $\mu$ be the partition obtained by removing all columns of height $\ell(\lambda)$ from $\lambda$; therefore, $s_\lambda(1^{\ell(\lambda)}) = s_\mu(1^{\ell(\lambda)})$. If $\mu=(1)$,
then
\[\dim{(S_{\lambda}\bC^{\ell(\lambda)})}=s_{\mu}(1^{\ell(\lambda)})=\ell(\lambda)\leq\ell(\lambda)(n-\ell(\lambda))\]
because $\ell(\lambda)<k\leq n$ . If $\mu=(2)$ or $(1,1)$, then our assumption, $n\geq\frac{\dim{(S_\lambda\bC^k)}}{k}+k$, says
\[n\geq\frac{3k+1}{2},\quad n\geq\frac{3k-1}{2},\]
which imply the desired inequalities
\[n\geq\frac{3\ell(\lambda)+1}{2}=\frac{s_{\lambda}(1^{\ell(\lambda)})}{\ell(\lambda)}+\ell(\lambda),\]
\[n\geq\frac{3\ell(\lambda)-1}{2}=\frac{s_{\lambda}(1^{\ell(\lambda)})}{\ell(\lambda)}+\ell(\lambda),\]
respectively. Otherwise, we can prove \eqref{eq:1} for our remaining case
\begin{align*}
\frac{s_{\lambda}(1^{\ell(\lambda)+1})}{\ell(\lambda)+1}&\geq\frac{s_{\mu}(1^{\ell(\lambda)+1})}{\ell(\lambda)+1}\\
&\geq\frac{s_{\mu}(1^{\ell(\lambda)})}{\ell(\lambda)}+1\\
&=\frac{s_{\lambda}(1^{\ell(\lambda)})}{\ell(\lambda)}+1.
\end{align*}

The first inequality is true because given a semistandard Young tableau of shape $\mu$ filled with entries from $\{1,\dots,\ell(\lambda)+1\}$, one can obtain a semistandard Young tableau of shape $\lambda$ filled with entries from $\{1,\dots,\ell(\lambda)+1\}$ in the following way: adjoin a rectangle to the left of $\mu$ in order to obtain the shape $\lambda$, and label the entire $i$th row of the rectangle with $i$. Since $\ell(\mu)\leq\ell(\lambda)-1$, we can apply Lemma~\ref{lem:strong-ineq} to obtain the second inequality. The last line follows because the rectangle removed from $\lambda$ in order to obtain $\mu$ must be constant along rows when filled with integers $1,\dots,\ell(\lambda)$, and this is done in exactly one way. Therefore, we've proved the case when $i=k-\ell(\lambda)$.

If $\ell(\lambda)=k$, then we need only show \ref{Tevelev-ineq} holds for $i=0$, but this is our assumption on $n$.
\end{proof}

\end{document}